\newcommand{\T}{{\bf T}}
\newcommand{\R}{{\bf R}}
\newcommand{\Z}{{\bf Z}}
\newcommand{\set}[1]{\left\{#1\right\}}
\newtheorem{theorem}{Theorem}[section]
\newtheorem{lemma}{Lemma}[section]
\theoremstyle{definition}
\theoremstyle{remark}
\newtheorem{remark}{Remark}[section]
\newcommand{\safeinput}[1]{\IfFileExists{#1}{{\catcode`\&=0\input{#1}}}{\message{File
        #1 does not exists. Skipping.}}}
\newcommand{\D}[1]{\,\mathrm{d}#1}
\newcommand{\mean}[1]{\overline{#1}}
\newcommand{\bmean}[1]{\hat{#1}}
\newcommand{\textem}[1]{{\em #1}}
\newcommand{\defn}[1]{\textem{#1}}
\newcommand{\temp}[1]{{\mathrm T}}
\newcommand{\norm}[1]{⎸ #1 ⎹}
\newcommand{\ddt}[2][\ ]{\frac{\D{#1}}{\D{#2}}}
\newcommand{\cotangent}{T^*}
\renewcommand{\eqref}[2][0]{{\ifnum#1=0(\fi}eq. \ref{#2}{\ifnum#1=0)\fi}}
\newcommand{\muchlessthan}[0]{<\hspace{-1ex}<}
\long\def\aureply#1{\ifhmode\newline\fi\noindent{\bf Author Reply}:\ {\em #1}}
\def\gettimestamp#1<#2>{\def\timestamp{\tt #2}}
\begin{document}

\title[Invariant tori]{Invariant tori for the Nos{\'e} Thermostat near the High-Temperature Limit}
\author{Leo T. Butler}
\address{Department of Mathematics, North Dakota State University, 
Fargo, ND, USA, 58108}
\email{leo.butler@ndsu.edu}
\date{\today}
\subjclass[2010]{37J30; 53C17, 53C30, 53D25}
\keywords{thermostats; Nos{\'e}-Hoover thermostat; Hamiltonian mechanics; KAM theory}

\begin{abstract}
  Let $H(q,p) = \frac{1}{2} p^2 + V(q)$ be a $1$-degree of freedom mechanical
  Hamiltonian with a $C^r$ periodic potential $V$ where $r>4$. The Nos{\'e}-thermostated
  system associated to $H$ is shown to have invariant tori near the
  infinite temperature limit. This is shown to be true for all thermostats
  similar to Nos{\'e}'s. These results complement the result of Legoll,
  Luskin and Moeckel who proved the existence of such tori near the
  decoupling limit~\cite{MR2299758,MR2519685}.
\end{abstract}
\begin{arxivabstract}
  Let H(q,p) = ½p² + V(q) be a 1-degree of freedom mechanical
  Hamiltonian with a Cⁿ periodic potential V where n>4. The
  Nosé-thermostated system associated to H is shown to have invariant
  tori near the infinite temperature limit. This is shown to be true
  for all thermostats similar to Nosé's. These results complement the
  result of Legoll, Luskin and Moeckel who proved the existence of
  such tori near the decoupling limit.
\end{arxivabstract}

\maketitle

\section{Introduction} \label{sec:intro}

The computation of equilibrium statistical properties of molecular
systems is of great importance to applied subjects such as biology,
chemistry, computational physics and materials science. These
equilibrium statistical properties are phase space integrals like
\begin{equation}
  \label{eq:eq-stat}
  \mean{f} = \int f(q,p) \D{μ}, \qquad \D{μ} = \exp(- β H) \D{p}\D{q}/Z,
\end{equation}
where $q$ is the position of the system and $p$ is its momentum, $H =
H(q,p)$ is the total energy of the system, $\beta=1/T$ is the
reciprocal of the equilibrium temperature $T$ and $Z = Z(β)$ is a
normalization constant, also called the partition function.

In practice, $f=f(q,p)$ is a ``measurement'' or ``observable'', such
as the position of the first atom in the system. The computation of
the integral \eqref{eq:eq-stat} can be very expensive, so one often
wants to replace that multi-dimensional average with the time average
\begin{equation}
  \label{eq:birkhoff-average}
  \bmean{f} = \lim_{T \to \infty} \dfrac{1}{T} \int_0^T f(q(t),p(t)) \D{t}
\end{equation}
where $(q(t),p(t))$ are the position and momenta of the system at time
$t$. In principle, $\bmean{f}$ depends on the initial condition
$(q(0),p(0)$. When, for almost all initial conditions the average in
\eqref{eq:birkhoff-average}--called a Birkhoff average--converges to
$\mean{f}$ the system is \defn{ergodic}. Ergodic systems have many
interesting properties, but from the point-of-view here, they provide
a means to an end: reduction of the multi-variable integral
\eqref{eq:eq-stat} to a single-variable integral.

In equilibrium statistical mechanics, the Hamiltonian $H$ is the
internal energy of an infinitesimal system $S$ that is immersed in a
heat bath $B$ at the temperature $T$. A simple model of the exchange
of energy between the infinitesimal system $S$ and heat bath $B$ was
introduced by Nos{\'e} \cite{nose}. This consists of adding an extra
degree of freedom $s$ and rescaling momentum by $s$:
\begin{equation}
  \label{eq:nose}
  F = H(q,p s^{-1}) + \underbrace{\dfrac{1}{2 M} p_s^2 + n k T \ln s}_{N},
\end{equation}
where $n$ is the number of degrees of freedom of the system $S$, $M$
is the mass of the thermostat and $k$ is Boltzmann's
constant. Nos{\'e}'s thermostated Hamiltonian $F$ has two desirable
properties: the orbit average of $\temp{} = \norm{p s^{-1}}^2$ is $T$
and the thermostated system is Hamiltonian. A drawback of the Nos{\'e}
thermostat is the measure $\D{μ}_N = \exp(-β F)
\D{p}\D{q}\D{p_s}\D{s}$ is not normalizable (i.e. there is no
partition function for $F$), so phase space averages with respect to
the extended phase space variables $(q,p,s,p_s)$ are undefined.

Hoover~\cite{hoover} introduced a non-symplectic reduction of Nos{\'e}'s
thermostat by eliminating the state variable $s$ and rescaling
time $t$:

\[ q = q,\qquad ρ = ps^{-1},\qquad \ddt{τ} = s \ddt{t},\qquad ξ = \ddt[s]{τ}. \]

This reduction has the desirable properties: when $E = H(q,ρ) +
\frac{1}{2M} ξ^2$, the measure $\D{μ}_E = \exp(-β E) \D{q}\D{ρ}\D{ξ}$
is finite and so has a partition function; it projects to $\D{μ}$
\eqref{eq:eq-stat}; it is stationary for the reduced thermostat; and when
the system is a simple harmonic oscillator, the equilibrium
statistical mechanical model predicts the variates $q, ρ$ and $ξ$ are
Gaussian.

Indeed, the Nos{\'e}-Hoover thermostated simple harmonic oscillator
reduces to the following ``simple'' system:
\begin{align}
  \label{eq:nose-hoover}
  \dot{q} &= ρ, && \dot{ρ} = -q - \xi ρ, &&
  \dot{\xi} = \left( ρ^2 - T \right)/M.
\end{align}
Legoll, Luskin and Moeckel show in~\cite{MR2299758} that near the
decoupled limit of $M=∞$ and $ξ=0$, the thermostated harmonic
oscillator \eqref{eq:nose-hoover} is non-ergodic. By means of an
averaging argument, they reduce the thermostated equations to a
non-degenerate twist map to show the existence of KAM tori. The result is
generalized in a subsequent paper to $1$-degree of freedom thermostats
for which an associated potential function $G$ (eq. 33 of
\cite{MR2519685}) is not isochronous.

\subsection{The high-temperature limit}
\label{sec:high-t-limit}

The present paper examines the dynamics of Nos{\'e}'s thermostat near
the high-temperature limit $T=∞$ with the thermostat mass $M$ held
constant. It presents a proof of the existence of KAM tori based on
the integrability of suitably rescaled equations at the $T=∞$
limit. Specifically,

\begin{theorem}
  \label{thm:kam-tori-high-temperature-limit}
  Let $V : \R/2 π \Z → \R$ be $C^r$, $r>4$, and let $H : \cotangent \R/2 π \Z → \R$ be
  \begin{equation}
    \label{eq:h}
    H(q,p) = \frac{1}{2}  p^2 + V(q).
  \end{equation}
  Fix the thermostat mass $M > 0$. The Nos{\'e}-thermostated
  Hamiltonian $F$ \eqref{eq:nose} associated to $H$ possesses
  invariant KAM tori for all $T > 0$ sufficiently large.
\end{theorem}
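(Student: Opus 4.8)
The plan is to rescale the Nos\'e Hamiltonian $F$ \eqref{eq:nose} so that the high-temperature limit $T=\infty$ becomes a Liouville-integrable system with a non-degenerate frequency map, and then to apply a finitely-differentiable KAM theorem with small parameter $\epsilon=1/(kT)$. With $n=1$ the Hamiltonian is $F=\tfrac12 p^2 s^{-2}+V(q)+\tfrac{1}{2M}p_s^2+kT\ln s$, and the $s$-effective potential $\tfrac12 p^2 s^{-2}+kT\ln s$ has its well at $s\sim\abs{p}/\sqrt{kT}$. I would therefore introduce the symplectomorphism $s=\sigma(kT)^{-1/2}$, $p_s=(kT)^{1/2}p_\sigma$ (leaving $(q,p)$ fixed) and replace $F$ by the affine combination $\tilde F=(kT)^{-1}F+\tfrac12\ln(kT)$, which for fixed $T$ is an affine reparametrisation of the Hamiltonian and hence harmless for invariant tori. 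This yields
\[ \tilde F=\tilde F_0+\epsilon\,V(q),\qquad \tilde F_0=\frac{p_\sigma^2}{2M}+\frac{p^2}{2\sigma^2}+\ln\sigma,\qquad \epsilon=\frac1{kT}. \]

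Since $q$ is cyclic for $\tilde F_0$, the momentum $p$ is a second commuting integral, so $\tilde F_0$ is Liouville-integrable on the open region where $p$ is bounded away from $0$ and $(\sigma,p_\sigma)$ lies on a closed oval of the one-degree-of-freedom system $\tfrac{p_\sigma^2}{2M}+W_p(\sigma)$, $W_p(\sigma)=\tfrac{p^2}{2\sigma^2}+\ln\sigma$ (which has a single non-degenerate well on $\sigma>0$, with minimum at $\sigma=\abs{p}$). The common level sets there are $2$-tori, and on each the flow is obtained by straightening the skew product $\dot q=p/\sigma^2$ against the $\sigma$-oscillation angle, so it is linear with frequencies $\omega_1=\omega_1(E,p)$ (the frequency of the $\sigma$-oscillation at $(\sigma,p_\sigma)$-energy $E$) and $\omega_2=\langle p/\sigma^2\rangle_{E,p}$ (its orbit average). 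I would then normalise: the substitution $\sigma=\abs{p}\,\eta$ together with a rescaling of $p_\sigma$ and of time turns the $\sigma$-oscillator into the fixed, $p$-independent oscillator $\tfrac{\pi^2}{2M}+w(\eta)$, $w(\eta)=\tfrac{1}{2\eta^2}+\ln\eta$, at energy $\hat E=E-\ln\abs{p}$; hence, restricting to $p>0$, $\omega_1=f(\hat E)/p$ and $\omega_2=g(\hat E)/p$, with $f(\hat E)=2\pi/\hat T(\hat E)$ ($\hat T$ the period) and $g(\hat E)=\langle1/\eta^2\rangle_{\hat E}$. A direct computation gives
\[ \det\frac{\partial(\omega_1,\omega_2)}{\partial(E,p)}=\frac{fg'-f'g}{p^{3}}, \]
and since $\omega_1\neq0$ makes $(E,p)\mapsto(I_1,p)$ a diffeomorphism, Kolmogorov non-degeneracy of $\tilde F_0$ is equivalent to $fg'\neq f'g$, i.e.\ to the ratio $g/f=\hat T(\hat E)\,\langle1/\eta^2\rangle_{\hat E}/(2\pi)$ being non-constant in $\hat E$.

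The main obstacle is this last step — showing $\hat E\mapsto\hat T(\hat E)\langle1/\eta^2\rangle_{\hat E}$ is non-constant on some subinterval — which is the analogue here of the non-isochronicity hypothesis of Legoll, Luskin and Moeckel~\cite{MR2519685}. I would verify it by expanding $g/f$ near the bottom $w(1)=\tfrac12$ of the well: both $\hat T(\hat E)$ and $\langle1/\eta^2\rangle_{\hat E}$ have expansions in $\hat E-\tfrac12$ whose linear coefficients are determined by $w''(1)=2$, $w'''(1)=-10$, $w''''(1)=54$ through the standard anharmonic-oscillator formulae, and one checks the resulting linear coefficient of $g/f$ is non-zero, so $g/f$ is non-constant near $\hat E=\tfrac12$ and Kolmogorov non-degeneracy holds on an open region of action space. (As a cross-check one may contrast this with the $\hat E\to\infty$ asymptotics, where the turning points spread as $\eta_-\sim(2\hat E)^{-1/2}$ and $\eta_+\sim e^{\hat E}$ and a Laplace-type estimate of the two period integrals shows the $1/\eta^2$-weight localises near $\eta_-$, producing a different limit for $g/f$.)

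Finally, on a compact piece of that non-degenerate region the unperturbed frequencies and the determinant above are uniformly bounded away from degeneracy, while the perturbation $\epsilon V(q)$, expressed in action--angle variables, has $C^r$-norm $O(\epsilon)$; since $V\in C^r$ with $r>4$ and $\tilde F_0$ is analytic there, a finitely-differentiable KAM theorem for two degrees of freedom applies for all small $\epsilon$ and yields a positive-measure family of invariant tori of $\tilde F$. Undoing the symplectic rescaling and the affine rescaling of $F$ — neither of which destroys invariant tori — produces invariant tori of $F$ for all $T=1/(k\epsilon)$ sufficiently large, which is the assertion of Theorem~\ref{thm:kam-tori-high-temperature-limit}.
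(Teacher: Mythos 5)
Your route is genuinely different from the paper's and is, in outline, viable. The paper rescales to $F_\beta$, passes to Cartesian coordinates to exhibit $F_0$ as a central-force problem, and then builds approximate action--angle variables by an explicit chain of generating functions near the circular orbit $\Xi_1$, reading off Kolmogorov non-degeneracy from the Hessian of the resulting normal form (eq. \ref{eq:bnf}). You instead exploit the Liouville integrability of the limit directly ($p$ and $\tilde F_0$ commute), use the scaling $\sigma=\abs{p}\eta$ to reduce the frequency ratio to a single function $g/f$ of the reduced energy $\hat E$, and reduce KAM sufficiency to $(g/f)'\neq 0$. This is cleaner conceptually and closer in spirit to Legoll--Luskin--Moeckel. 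Your reduction of the Kolmogorov determinant to $fg'-f'g$ is correct, as is the equivalence via $(E,p)\mapsto(I_1,p)$.

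The gap is at the decisive step: you assert that ``one checks the resulting linear coefficient of $g/f$ is non-zero'' without doing the check, and for this potential that is precisely the point that cannot be waved away. The logarithmic potential is the degenerate member of the family $\sigma^\alpha/\alpha$ for which the naive R\"ussmann--Sevryuk period-ratio condition at the relative equilibria \emph{fails} (as the paper notes after Lemma \ref{rot-inv}), so non-degeneracy here is not ``generic'' and must be computed. Carrying out your own recipe with $M=1$: writing $\eta=1+x$, the well is $x^2-\tfrac{5}{3}x^3+\tfrac{9}{4}x^4+\cdots$, the standard anharmonic formulae give $f=\sqrt2-\tfrac{11}{6}I+O(I^2)$ and, using $\langle x\rangle = -3aI/\omega^3$ and $\langle x^2\rangle=I/\omega$, $g=\langle 1/\eta^2\rangle=1-\sqrt2\,I+O(I^2)$, whence
\begin{equation*}
\frac{g}{f}=\frac{1}{\sqrt2}\Bigl(1+\bigl(\tfrac{11}{12}-1\bigr)I\Bigr)+O(I^2)=\frac{1}{\sqrt2}\Bigl(1-\tfrac{1}{12}I\Bigr)+O(I^2).
\end{equation*}
So the coefficient is non-zero --- but only by the near-cancellation $\tfrac{11}{12}-1=-\tfrac{1}{12}$, consistent with the paper's Hessian determinant $-\tfrac{1}{12}$ in Lemma \ref{mu-is-1}. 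Your proof is incomplete until this number is exhibited; with it, the argument closes. (Your fallback via the $\hat E\to\infty$ asymptotics of the apsidal angle would also establish non-constancy of $g/f$, hence non-degeneracy on a dense open set, but it too is only sketched.)
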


The intuition behind this theorem is the following: for large
temperatures, because the potential $V$ is bounded, most of the energy
must be kinetic. Therefore, the dynamics should look like a
perturbation of the purely kinetic hamiltonian (where $V \equiv
0$). While this picture is not exactly correct, it is accurate in that
the high-temperature thermostated system resembles a perturbation of
an integrable system.

\subsection{Alternative Thermostats}
\label{sec:alt-thermostats}

A natural question that arises in light of the above results on the
existence of invariant tori is whether there are thermostats like
Nos{\'e}'s that do not possess these invariant KAM tori in the large
temperature limit. Let's say that a Nos{\'e}-like thermostat is one
which involves momentum rescaling and the thermodynamic equilibrium
(where $\dot s = 0 = \dot p_s$) is independent of that rescaling. This
paper proves that

\begin{theorem}
  \label{thm:kam-tori-for-nose-like-thermostats}
  Let $(N,u) = (N_T(s,p_s), u(s))$ be a thermostat that satisfies
  \begin{enumerate}
  \item $N$ is homogeneous quadratic and increasing in $p_s$;
  \item $u : \R^+ → \R^+$ is an increasing diffeomorphism;
  \item for all Hamiltonians $H=H(q,p)$, if $F=H(q,p/u(s)) +
    N_T(s,p_s)$ has a thermodynamic equilibrium then it is independent
    of $s$.
  \end{enumerate}
  Then, up to a rescaling and change of variables, $u = s$ and there
  is a smooth positive function $Ω_T = Ω_T(u)$ such that
  \begin{equation}
    \label{eq:nose-like-thermostat}
    N = \frac{1}{2} Ω_T p_u^2 + n k T \ln u.
  \end{equation}
  In addition, if $Ω_T(u/√ T) \stackrel{T → ∞}{\longrightarrow} Ω(u)$
  in $C^r(\R^+,\R^+)$ for some $r>4$, then the Nos{\'e}-thermostated
  Hamiltonian $F$ associated to $H$ \eqref{eq:h} possesses invariant
  KAM tori for all $T > 0$ sufficiently large.
\end{theorem}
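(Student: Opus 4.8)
\emph{Proof proposal.} The statement splits into a normalization part---conditions (1)--(3) force the shape \eqref{eq:nose-like-thermostat}---and a dynamical part---that shape is KAM-stable near the high-temperature limit.

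\textbf{Normalization.} By (1), $N=N_T(s,p_s)$ is an even quadratic in $p_s$ with positive leading coefficient, so $N=\tfrac12 a(s)p_s^{2}+c(s)$ with $a>0$. The cotangent lift of the diffeomorphism $s\mapsto u(s)$, namely $(s,p_s)\mapsto(u,p_u)=(u(s),\,p_s/u'(s))$, is a symplectomorphism of $\cotangent\R^{+}$ carrying $F$ to $H(q,p/u)+\tfrac12\Omega_T(u)p_u^{2}+\tilde c(u)$ with $\Omega_T(u)=a(s(u))\,u'(s(u))^{2}>0$ and $\tilde c=c\circ s$; in these coordinates the momentum-rescaling function is the identity ($u=s$). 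To pin down $\tilde c$, set $w=p/u$ and read off Hamilton's equations: $\dot u=\Omega_T(u)p_u$ and, on $\{p_u=0\}$, $\dot p_u=\tfrac1u\,wH_p(q,w)-\tilde c'(u)$. Hence the thermodynamic-equilibrium set $\{\dot s=0=\dot p_s\}$ is $\{p_u=0,\ wH_p(q,w)=u\tilde c'(u)\}$; applying (3) with, say, $H=\tfrac12 p^{2}$ (for which $wH_p(q,w)=w^{2}$), this set is independent of the thermostat coordinate precisely when $u\tilde c'(u)$ is constant. For a nontrivial thermostat that constant is positive; calling it $nkT$ (the rescaling) and discarding the dynamically irrelevant additive constant gives $\tilde c(u)=nkT\ln u$, which is \eqref{eq:nose-like-thermostat}.

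\textbf{The integrable limit.} With $n=1$ and $V$ as in \eqref{eq:h}, the thermostatted Hamiltonian is $F=\tfrac{p^{2}}{2u^{2}}+V(q)+\tfrac12\Omega_T(u)p_u^{2}+kT\ln u$ on $\cotangent(\R/2\pi\Z)\times\cotangent\R^{+}$. For fixed $T$ apply the exact symplectomorphism $q=\hat q$, $p=\hat p$, $u=T^{-1/2}\hat u$, $p_u=T^{1/2}\hat p_u$; a direct computation gives $F=T\tilde F-\tfrac{kT}{2}\ln T$, where
\[
  \tilde F=\frac{\hat p^{2}}{2\hat u^{2}}+\tfrac12\,\Omega_T\!\left(\frac{\hat u}{\sqrt T}\right)\hat p_u^{2}+k\ln\hat u+\frac1T\,V(\hat q).
\]
Since $\tilde F$ and $F$ differ by an affine reparametrization they have the same invariant sets, so it suffices to treat $\tilde F$. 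By hypothesis $\Omega_T(\cdot/\sqrt T)\to\Omega$ in $C^{r}$ and $\tfrac1T V\to0$ in $C^{r}$, so on every bounded region $\tilde F\to\tilde F_{\infty}:=\tfrac{\hat p^{2}}{2\hat u^{2}}+\tfrac12\Omega(\hat u)\hat p_u^{2}+k\ln\hat u$ in $C^{r}$. The limit $\tilde F_{\infty}$ is cyclic in $\hat q$, hence Liouville integrable with commuting integrals $\hat p$ and $\tilde F_{\infty}$; at fixed $\hat p=c>0$ the reduced one-degree-of-freedom system has effective potential $W_c(\hat u)=\tfrac{c^{2}}{2\hat u^{2}}+k\ln\hat u$, with a unique nondegenerate minimum at $\hat u_{\ast}(c)=c/\sqrt k$. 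Thus the elliptic periodic orbits $\gamma_c=\{\hat p=c,\ \hat u=\hat u_{\ast}(c),\ \hat p_u=0\}$ ($c$ in a compact subinterval of $\R^{+}$) are surrounded by an open region $\mathcal R$ foliated by Lagrangian invariant $2$-tori, on which we introduce action--angle coordinates with actions $(\hat p,\,J)$, $J=\tfrac1{2\pi}\oint\hat p_u\,\D{\hat u}$.

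\textbf{KAM step and the main obstacle.} On a fixed energy level $\{\tilde F=E\}$ meeting $\mathcal R$ take the global section $\{\hat q=0\}$ (transverse since $\dot{\hat q}=\hat p/\hat u^{2}>0$). The return map is a $C^{r-1}$ area-preserving annulus map whose invariant circles are the traces of the tori in $\mathcal R$, and as $T\to\infty$ it converges in $C^{r-1}$ to the return map of $\tilde F_{\infty}$. If that limiting map is a monotone twist map on a subannulus, Moser's twist theorem in its $C^{\ell}$ form ($\ell>3$---which is exactly why $r>4$ is required, since the flow of a $C^{r}$ Hamiltonian, hence the return map, is only $C^{r-1}$) produces, for all $T$ large enough, a positive-measure Cantor family of invariant circles, i.e.\ invariant KAM $2$-tori for $\tilde F$ and therefore for $F$. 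The one genuinely nonroutine point is the twist (iso-energetic nondegeneracy) condition for $\tilde F_{\infty}$. Expanding the frequency map near $\gamma_c$ gives $\omega_{1}(c,0)=\tfrac{d}{dc}W_c(\hat u_{\ast}(c))=k/c$ and $\omega_{2}(c,0)=\sqrt{\Omega(\hat u_{\ast}(c))\,W_c''(\hat u_{\ast}(c))}=\tfrac{k\sqrt2}{c}\sqrt{\Omega(c/\sqrt k)}$, so the rotation number of $\gamma_c$ is $\sqrt{2\,\Omega(c/\sqrt k)}$, with first anharmonic correction in $J$ governed by the Birkhoff coefficient $\beta(c)$ of the libration in $W_c$; hence the nondegeneracy determinant is nonzero wherever $\Omega'(c/\sqrt k)\neq0$ or $\beta(c)\neq0$. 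I expect to finish by showing that the ``logarithmic $+$ inverse-square'' potential $W_c$ is nowhere isochronous, so $\beta\not\equiv0$ (provable by examining the period integral $\oint\hat p_u\,\D{\hat u}$), whence the real-analytic nondegeneracy determinant does not vanish identically on $\mathcal R$ and the twist holds on an open dense subset. This last step is the one I anticipate will take the most care, and it is where the specific structure of a Nos{\'e}-like thermostat---the $p/u$ momentum rescaling together with the $kT\ln u$ term---is actually used.
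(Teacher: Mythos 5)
Your normalization half is essentially the paper's argument: Hamilton's equations give $\dot p_s = \tfrac{u'}{u}E(H) - N_1$, the equilibrium condition forces $E(H) = N_1/(\ln u)_s$ to be a constant $nkT$, and the cotangent lift of $s\mapsto u(s)$ turns $\tfrac12 a(s)p_s^2$ into $\tfrac12\Omega_T(u)p_u^2$ with $\Omega_T = a\cdot(u')^2$. That part is fine. For the KAM half you take a genuinely different route --- pass to the $C^r$ integrable limit $\tilde F_\infty$, reduce by the cyclic variable, and invoke Moser's twist theorem on a Poincar\'e section --- whereas the paper constructs explicit approximate action--angle variables to degree $4$ around the circular orbit $\Xi_1$ via two generating functions (Lemmas \ref{mu-is-1} and \ref{lem:nose-like-normal-form}) and checks Kolmogorov nondegeneracy of the Hessian of \eqref{eq:bnf-nl} in the two actions $(I,J)$. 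Either framework could work; the problem is the nondegeneracy verification, which is exactly where your proposal stops.

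Concretely: you correctly find that the rotation number of $\gamma_c$ is $\sqrt{2\,\Omega(c/\sqrt k)}$, so for constant $\Omega$ (in particular the original Nos\'e thermostat) the leading twist across the family of circular orbits vanishes identically --- this is precisely the degeneracy the paper flags when it notes that the R\"ussmann--Sevryuk period-ratio criterion fails for the potentials $\sigma^{\alpha}/\alpha$ and $\ln\sigma$. Your proposed rescue, that the libration in $W_c$ is ``nowhere isochronous'' so its Birkhoff coefficient never vanishes, is asserted rather than proven, and it is not true for all admissible $\Omega$: in the normal form \eqref{eq:bnf-nl} the anharmonic coefficient is $\alpha = (6b - 9a^2 + 30a - 44)/96$, which vanishes together with $\Omega'(1)=a=0$ when $a=0$, $b=22/3$. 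At that orbit both branches of your dichotomy fail, yet invariant tori still exist because the Hessian determinant $-(2\alpha+\beta^2)$ equals $-1\neq 0$ through the cross coefficient $\beta=(2-a)/2$. So the nondegeneracy genuinely lives in the full two-action Hessian (equivalently the bordered iso-energetic determinant), not in the two quantities you isolate, and even as a sufficient condition your dichotomy ignores possible cancellation between the $\Omega'$ and Birkhoff contributions to $d\rho/dJ$ on a fixed energy level. Establishing the twist therefore requires carrying the expansion about $\gamma_c$ to fourth order and showing, as the paper does, that $\alpha=\beta=\gamma=0$ is incompatible with the constraint equations; until that computation (or an equivalent period-integral estimate) is supplied, the proof is incomplete.
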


This theorem is proven in a manner similar to that of Theorem
\ref{thm:kam-tori-high-temperature-limit}. Indeed, Theorem
\ref{thm:kam-tori-high-temperature-limit} can be viewed as a special
case of \ref{thm:kam-tori-for-nose-like-thermostats}.

\subsection{A Hamiltonian Proof of Non-Ergodicity of the Thermostated
  Harmonic Oscillator}
\label{sec:ham-proof}

It is common in the analysis of the Nos{\'e}-Hoover thermostat to fix
the temperature $T=1$ and let the thermostat mass $M → ∞$ (the
weak-coupling limit). This is not equivalent to fixing the thermostat
mass $M=1$ and letting $T → ∞$ (the high-temperature limit), see
\eqref{eq:rescaling} below, but the method used in the proof of
Theorem \ref{thm:kam-tori-high-temperature-limit}, along with
first-order averaging, yields a proof of the following theorem, first
proven in \cite{MR2299758}.

\begin{theorem}
  \label{thm:kam-tori-large-mass-limit}
  Let $ω > 0$ and
  \begin{equation}
    \label{eq:h-ho}
    H(q,p) = \frac{1}{2} p^2 + \frac{1}{2} (ω q)^2.
  \end{equation}
  Fix the temperature $T>0$. The Nos{\'e}-thermostated Hamiltonian $F$
  \eqref{eq:nose} associated to $H$ possesses KAM tori for all $ε=1/√M
  > 0$ sufficiently small.
\end{theorem}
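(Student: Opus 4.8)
The symplectic substitution $(q,p)\mapsto(q/\sqrt\omega,\sqrt\omega\,p)$ followed by a rescaling of time conjugates the Nos\'e Hamiltonian \eqref{eq:nose} built from \eqref{eq:h-ho} to the one built from the standard oscillator $\tfrac12p^{2}+\tfrac12q^{2}$, with $M$ replaced by $M\omega$ and $T$ by $T/\omega$; since $\omega>0$ is fixed we may take $\omega=1$ (and $n=k=1$), and write, with $\epsilon=1/\sqrt M$,
\[ F=\frac{1}{2s^{2}}p^{2}+\frac12q^{2}+\frac{\epsilon^{2}}{2}p_{s}^{2}+T\ln s . \]
At $\epsilon=0$ the thermostat kinetic energy vanishes, $s$ is frozen, and $(q,p)$ is a harmonic oscillator of frequency $1/s$, energy $h=\tfrac{p^{2}}{2s^{2}}+\tfrac{q^{2}}{2}$ and action $A=\tfrac1{2\pi}\oint p\D{q}=sh$, while $p_{s}$ merely drifts; so $F$ is degenerate at $\epsilon=0$, as in Theorem~\ref{thm:kam-tori-high-temperature-limit}, but the degeneracy must now be lifted by averaging rather than simply being absent after rescaling. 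The plan is to rescale as in \eqref{eq:rescaling} so as to expose a fast--slow structure, average over the fast angle, read off an integrable guiding system and its invariant $2$-tori, and apply KAM; the twist condition is the only genuine difficulty.

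After the rescaling of the thermostat variables and of time analogous to \eqref{eq:rescaling}, $F$ becomes a real-analytic fast--slow (properly degenerate) Hamiltonian in the singular parameter $\epsilon$: $(q,p)$ is fast, $(s,p_{s})$ is slow (with slow frequency of order $\epsilon$), and they are coupled only through $p^{2}/s^{2}$. Passing to action--angle coordinates $(A,\phi)$ for the fast oscillator with $s$ held as a parameter, and correcting $p_{s}\mapsto p_{s}-\partial_{s}S(\phi,A;s)$ to keep the full change of variables canonical, one computes that this correction is a pure oscillation, $\partial_{s}S=\tfrac{A}{2s}\sin2\phi$, so that the $\phi$-dependent part of $F$ is $O(\epsilon)$ with vanishing $\phi$-average.

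First-order averaging over $\phi$ then removes the $O(\epsilon)$ term and produces an integrable guiding Hamiltonian in which $A$ is constant, $\phi$ rotates, and the slow pair $(s,p_{s})$ obeys a one-degree-of-freedom mechanical system with effective potential
\[ W_{A}(s)=\frac{A}{s}+T\ln s , \]
the harmonic-oscillator analogue of the potential $G$ of \cite{MR2519685}. For each $A>0$, $W_{A}$ has a unique critical point $s_{\star}=A/T$ --- the state where $h=T$, i.e.\ where the thermostat is at equilibrium --- and it is a nondegenerate minimum, $W_{A}''(s_{\star})=T^{3}/A^{2}>0$. Near $s_{\star}$ the slow motion is a true oscillation, so the guiding system carries a two-parameter family of invariant $2$-tori, parametrised by $A$ (in any compact subinterval of $(0,\infty)$) and by the slow action $B$ (small, so that the slow orbit stays in the convex well of $W_{A}$ and bounded away from its separatrix). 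On such a region --- compact and bounded away from $s=0$ --- the standard averaging estimates bound $F$ against the guiding Hamiltonian by $O(\epsilon^{2})$, a factor $O(\epsilon)$ below the slow dynamics itself.

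It remains to apply the (iso-energetic, properly-degenerate) KAM theorem, with $F$ regarded as this $O(\epsilon^{2})$ perturbation of the guiding Hamiltonian, and conclude that for all sufficiently small $\epsilon$ a positive-measure family of the above $2$-tori survives in each energy level. The one real obstacle is the twist (nondegeneracy) condition. In the original time the two frequencies of a guiding torus are $\omega_{\phi}=1/s_{\star}=T/A$ and $\omega_{\mathrm{slow}}=\epsilon\sqrt{W_{A}''(s_{\star})}=\epsilon T^{3/2}/A$ to leading order in $B$, so their ratio $\omega_{\phi}/\omega_{\mathrm{slow}}=1/(\epsilon\sqrt T)$ is independent of both actions: the nondegeneracy is invisible at leading order and must come from the next-order terms --- concretely, from the anharmonicity of $W_{A}$ at $s_{\star}$ (equivalently, the non-isochronicity of the well, exactly the hypothesis isolated in \cite{MR2519685}), together with the $A$-dependence of $\omega_{\mathrm{slow}}$, both entering the bordered Hessian. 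The non-isochronicity is genuinely needed, since without it the relevant determinant would be only $O(\epsilon^{2})$, no larger than the averaging remainder. Establishing it reduces to a single period integral for the well $A/s+T\ln s$ together with two of its $s$-derivatives at $s_{\star}$ --- a finite, explicit computation well suited to a computer algebra system --- and checking that the resulting iso-energetic determinant is nonzero and dominates the $O(\epsilon^{2})$ remainder is the crux. Beyond this I anticipate no serious difficulty: since $V$ is real-analytic here, once the twist condition is in hand the KAM step is routine, and Theorem~\ref{thm:kam-tori-large-mass-limit} follows.
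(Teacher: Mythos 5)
Your plan is essentially the paper's proof: after the rescaling \eqref{eq:rescaling} the thermostated oscillator is a fast--slow system, one passes to action--angle variables for the fast oscillator with the thermostat variable as a parameter, averages over the fast angle, and obtains a slow one-degree-of-freedom system in an effective well (your $W_A(s)=A/s+T\ln s$; the paper's $E/(2(1-u))+\ln(1-u)+\cdots$ in \eqref{eq:gk-ave}) with a nondegenerate minimum at the thermodynamic equilibrium; and, as you correctly observe, the leading-order frequency ratio is action-independent, so the KAM twist must come from the anharmonicity of that well. The one thing you do not do is the one thing the theorem actually turns on: you defer the verification that this anharmonic correction is nonzero, and large compared with the $O(\epsilon^{2})$ averaging remainder, to ``a finite, explicit computation.'' The paper carries out exactly that computation: choosing the fast energy so that the slow critical point sits at $u=U=0$ (its $E=\kappa+O(\kappa^{2})$), the fourth-order expansion of the averaged Hamiltonian is $\tfrac12(U^{2}+u^{2})+\tfrac23 u^{3}+\tfrac34 u^{4}+O(\kappa)$, whose Birkhoff normal form is $\kappa I(1-13I/24)+O(\kappa^{2},5)$; the coefficient $-13/24\neq 0$ supplies the twist at order $\kappa$, one order above the $O(\kappa^{2})$ remainder, and KAM for properly degenerate systems applies. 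So your outline is sound and the deferred computation does come out nonzero, but as written the proposal stops one decisive step short of a proof: without exhibiting that nonvanishing coefficient (or the equivalent non-isochronicity of the well) nothing rules out the degenerate case you yourself identify as fatal.
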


\section{Terminology and Notation}
\label{sec:term-not}

Generating functions provide a convenient way to create canonical
transformations. To explain, let $(q',p') = f(q,p)$ be a canonical
transformation, so that $q' · dp' + p · dq = d φ$ is closed and
therefore locally exact. That is, there is a locally-defined function
$φ = φ(p';q)$ of the mixed coordinates $(p';q)$ such that $q' = ∂ φ/∂
p'$ and $p = ∂ φ/∂ q$. The transformation $f$ is implicitly determined
by $φ$. The identity transformation has the generating function $φ = q
· p'$.

In the sequel, a canonical system of coordinates $(x,X) = (x_1 , …,
x_n, X_1, …, X_n)$ are denoted using the capitalization convention:
the Liouville $1$-form equals $\sum_{i=1}^n X_i \D x_i$ and $X_i$ is
the momentum conjugate to the coordinate $x_i$.

The KAM theorem gives sufficient conditions which imply that a
sufficiently smooth perturbation (say $C^r$ for $r>2n$) of an integrable
$n$-degree of freedom Hamiltonian has invariant tori. A Hamiltonian
which satisfies one of these sufficient conditions is said to be KAM
sufficient.

In practice, construction of action-angle coordinates for a particular
Hamiltonian is a very difficult problem. However, \textem{approximate}
action-angle coordinates may be constructed by methods similar to
their construction in the Birkhoff Normal Form: by means of a sequence
of generating functions that transform the Hamiltonian into a
near-integrable form. In this case, one verifies KAM sufficiency for
the integrable approximation.

\section{The Rescaled Thermostat}
\label{sec:proof}

Let us rescale the variables in the Nos{\'e} thermostat so that the
Boltzmann constant $k=1$ and
\begin{align}
\label{eq:rescaling}
q  &=  \sqrt{M}\, w \bmod 2 π, &&& p  &=  W / \sqrt{M}, &&& s  &=  σ/\sqrt{MT}, &&& p_s  &=  \sqrt{MT}\, Σ.
\end{align}
With this canonical change of variables, the thermostated Hamiltonian
for $H$ \eqref{eq:h} is ($ε = 1/\sqrt{M}$)
\begin{equation}
  \label{eq:rescaled-f}
  F = T \times \underbrace{\left[ \frac{1}{2} \left( W/σ\right)^2 + \frac{1}{2} Σ^2 + β V(w/ε) + \ln σ \right]}_{F_{β}} - \frac{1}{2} T \ln(MT).
\end{equation}
Since the coordinates $(w,σ)$ and $(W,Σ)$ are canonically conjugate,
up to a rescaling of time by the factor $T$, the Hamiltonian flow of
$F$ equals that of $F_{β}$.

\section{KAM tori in the high-temperature limit}
\label{sec:kam}

Because the timescale of the thermostat, $ε$, enters into the rescaled
thermostated Hamiltonian $F_{β}$ only through the bounded potential
$V$, and the analysis of this section focuses on the high-temperature
limit $β → 0 ⁺$, the convention is adopted that
\begin{equation}
  \label{eq:eps=1}
  M = 1 \qquad (\implies ε = 1).
\end{equation}
The analysis below is altered in insignificant ways by this additional
hypothesis.

\begin{lemma}
  \label{rot-inv}
  Let $β = 0$. Under the canonical change of coordinates induced by introducing cartesian coordinates,
  \begin{equation}
    \label{eq:polar}
    (a,b) = ( σ \cos w, σ \sin w ),
  \end{equation}
  the rescaled thermostated Hamiltonian equals
  \begin{equation}
    \label{eq:f0}
    F_0 = \frac{1}{2} \left[ A^2 + B^2 \right] + \frac{1}{2} \ln \left( a^2 + b^2 \right).
  \end{equation}
  That is, $F_0$ is a mechanical hamiltonian with a rotationally invariant potential.
\end{lemma}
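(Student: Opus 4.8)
The plan is to recognize the change \eqref{eq:polar} as the classical polar-to-Cartesian substitution applied to the thermostat configuration pair $(w,\sigma)$, and then to exploit the rotational invariance of the Euclidean kinetic energy. First I set $\beta=0$ in the rescaled Hamiltonian $F_\beta$ of \eqref{eq:rescaled-f}: the potential term $\beta V(\cdot)$ drops out, leaving
\[
  F_0 = \tfrac12\,(W/\sigma)^2 + \tfrac12\,\Sigma^2 + \ln\sigma
\]
on the region $\sigma>0$, with $(w,\sigma)$ the configuration variables and $(W,\Sigma)$ their conjugate momenta (Liouville form $W\,\mathrm{d}w+\Sigma\,\mathrm{d}\sigma$).

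Next I produce the canonical lift of \eqref{eq:polar} explicitly. Following the generating-function convention of Section~\ref{sec:term-not}, take $\varphi(A,B;w,\sigma) = A\,\sigma\cos w + B\,\sigma\sin w$; then $a = \partial\varphi/\partial A = \sigma\cos w$ and $b = \partial\varphi/\partial B = \sigma\sin w$ reproduce \eqref{eq:polar}, while $W = \partial\varphi/\partial w = -A\,\sigma\sin w + B\,\sigma\cos w$ and $\Sigma = \partial\varphi/\partial\sigma = A\cos w + B\sin w$. Equivalently this is the cotangent lift of the point transformation $(w,\sigma)\mapsto(a,b)$, with new momenta $(A,B)$ fixed by $A\,\mathrm{d}a + B\,\mathrm{d}b = W\,\mathrm{d}w + \Sigma\,\mathrm{d}\sigma$; either way the map is canonical on $\{\sigma>0\}$, which is diffeomorphic to the punctured plane $\{(a,b)\neq(0,0)\}$ with $w\in\R/2\pi\Z$ the angular coordinate.

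The conclusion is then immediate. Solving for the new momenta gives $W/\sigma = -A\sin w + B\cos w$ and $\Sigma = A\cos w + B\sin w$, so $(W/\sigma,\Sigma)$ is the image of $(A,B)$ under an orthogonal transformation and hence $(W/\sigma)^2 + \Sigma^2 = A^2 + B^2$; also $\ln\sigma = \tfrac12\ln(\sigma^2) = \tfrac12\ln(a^2+b^2)$. Substituting into $F_0$ yields \eqref{eq:f0}, a mechanical Hamiltonian $\tfrac12(A^2+B^2) + U$ whose potential $U = \tfrac12\ln(a^2+b^2)$ depends on $(a,b)$ only through $a^2+b^2$, hence is rotationally invariant. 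I expect no genuine obstacle here; the only point needing a word of care is that the polar substitution degenerates at $\sigma=0$, so the lemma and the subsequent analysis are to be read on the punctured configuration space — which is harmless, since the $\ln\sigma$ term already repels orbits from $\sigma=0$ on every energy level.
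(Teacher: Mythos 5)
Your proof is correct and is precisely the ``simple computation'' the paper omits: the cotangent lift of the polar-to-Cartesian point transformation gives $\Sigma = A\cos w + B\sin w$ and $W/\sigma = -A\sin w + B\cos w$, so the kinetic term is preserved under the induced rotation of momenta and $\ln\sigma = \tfrac12\ln(a^2+b^2)$. Your remark about restricting to the punctured plane $\sigma>0$ is a sensible clarification consistent with the paper's later treatment of the singular locus.
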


The proof is a simple computation. With the interpretation that $F_0$
is the Hamiltonian of the thermostated free particle ($V \equiv 0$),
Hoover \cite{hoover} observed this integral, or rather its reduced
form, and the reduced integral appears in the work of Legoll, Luskin
and Moeckel~\cite{MR2299758,MR2519685}.

There is a family of periodic orbits of $F_0$ along the variety 
\begin{equation}
  \label{eq:periodic-variety}
  Ξ = \set{(σ,w,Σ,W) \mid σ=⎸ W ⎹ ≠ 0, Σ=0},
\end{equation}
with each periodic orbit parameterized by the angular momentum
integral $μ = W$. Ideally, one would like to apply a theorem of
R{ü}ssmann and Sevryuk \cite{MR1354540,MR1390625}. In this context the
theorem says that if the ratio of periods $T_1/T_2$ of the periodic
orbit and the linearized reduced hamiltonian is not constant, then
$F_0$ is KAM-sufficient, i.e. invariant KAM tori survive for $F_{β}$
for all $β$ sufficiently small. Unfortunately, the potential functions
$U(σ) = σ^{α}/α$ (including the degeneration, $U=\ln$, at $α = 0$) are
characterized by constancy of this ratio.

Instead, we compute an approximate change of coordinates to
action-angle variables using a succession of generating functions.

As noted above, $F_0$ has an invariant family of periodic orbits along
the variety $Ξ$, with each periodic orbit $Ξ_{μ} = \set{(⎸ μ ⎹ ,w,0,μ)
  \mid w ∈ \R/2 π \Z}$ parameterized by angular momentum $μ ≠ 0$. On
the other hand, let $\cotangent \T^2$ have the canonical coordinates
$\set{(θ,η,I,J) \mid θ, η ∈ \R/2 π \Z,\ I,J ∈ \R}$ and let $Z ⊂
\cotangent \T^2$ be the zero section $\set{ (θ,η,0,0) }$.

\begin{lemma}
  \label{mu-is-1}
  There are open sets $A ⊂ \cotangent \T^2$, $B ⊂ \cotangent (\R^+ ×
  \T^1)$ such that $Z ⊂ A$, $Ξ_1 ⊂ B$ and a canonical transformation
  $$Φ : A - Z → B - Ξ_1 \qquad (σ, w, Σ, W)= Φ(θ,η,I,J)$$ that transforms the Hamiltonian $F_0$
  \eqref{eq:rescaled-f} to
  \begin{align}
    \label{eq:bnf} 
    F_0 &= I(-\frac{11}{24} I + 1 + J + J^2) - J(1+J/2+J^2/3+J^3/4) + O(5)
  \end{align}
  where $I$ has degree $2$, $J$ has degree $1$ and $O(5)$ is a
  remainder term containing terms of degree $≥ 5$.
\end{lemma}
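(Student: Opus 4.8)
The plan is to exploit the rotational symmetry of $F_0$. In the coordinates $(σ,w,Σ,W)$ one has $F_0=\frac12Σ^2+\frac12(W/σ)^2+\ln σ$, which does not involve $w$, so $W$ is a first integral; I would treat $μ:=W$ as a parameter and view $F_0$ as a one-parameter family of one-degree-of-freedom Hamiltonians $\frac12Σ^2+U_μ(σ)$ with $U_μ(σ)=\frac12μ^2σ^{-2}+\ln σ$. Each $U_μ$ has a nondegenerate minimum at $σ=|μ|$, with value $U_μ(|μ|)=\frac12+\ln|μ|$ and harmonic frequency $ω_0(μ)=\sqrt{U_μ''(|μ|)}=\sqrt2/|μ|$, and the periodic orbit $Ξ_μ$ is exactly this elliptic equilibrium of the reduced flow. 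So a neighbourhood of $Ξ_1$ is a neighbourhood of $(σ,Σ)=(1,0)$ at $μ=1$, with $θ$ (conjugate to $I$) the transverse phase and $η$ (conjugate to $J$) the angle along the orbit.

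First I would put the angular-momentum direction in place by the affine point transformation $(w,W)\mapsto(η,J)$ chosen so that $Ξ_μ$ sits at $J=1-μ$ (e.g.\ $J=1-W$, $η=-w$); this is canonical because $w$ is cyclic. Next, for each $μ=1-J$ near $1$, I would build the action-angle variables $(θ,\mathcal{I})$ of the reduced one-degree-of-freedom system near $(σ,Σ)=(|μ|,0)$ from the generating function $S(σ,\mathcal{I};J)=\int_{|1-J|}^{σ}\sqrt{2(\mathcal{E}(\mathcal{I};J)-U_{1-J}(t))}\,\D t$, where $\mathcal{E}(\mathcal{I};J)$ inverts $\mathcal{I}=\frac1{2π}\oint Σ\,\D σ$; lifting to the full phase space via the generating function $\Phi=S(σ,\mathcal{I};J)+ηJ$ keeps the map canonical and yields $F_0=\mathcal{E}(\mathcal{I};J)$, a function of $\mathcal{I}$ and $J$ alone. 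Equivalently, and this is how the computation is actually organised, the same normal form is produced by a finite succession of generating functions in the spirit of the Birkhoff normal form: translate $(σ,W)$ so $Ξ_1$ sits at the transverse origin, then remove the cubic and reduce the quartic terms in the transverse $(σ,Σ)$ plane order by order, the odd transverse monomials being nonresonant. A final constant rescaling $\mathcal{I}=\sqrt2\,I$ (with $η$ rescaled inversely to stay canonical) normalises the transverse frequency to $1$; the resulting chart is a diffeomorphism off the zero section $Z=\{\mathcal{I}=0\}$ --- where the transverse ``polar'' angle degenerates --- and carries $Z$ onto $Ξ_1$, giving the open sets $A,B$ and the transformation $\Phi$ of the statement.

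It then remains to read off the $4$-jet of $F_0=\mathcal{E}(I;J)$ in the combined grading $\deg I=2$, $\deg J=1$. The $I$-independent part $U_μ(|μ|)=\frac12+\ln(1-J)$ contributes, after dropping the irrelevant additive constant, $-J(1+J/2+J^2/3+J^3/4)+O(5)$. The coefficient of $I$ is $ω_0(1-J)/\sqrt2=(1-J)^{-1}=1+J+J^2+O(5)$. The coefficient of $I^2$ at $J=0$ is half the first Birkhoff invariant $τ$ of $\frac12Σ^2+U_1(σ)$ at $σ=1$; from $U_1''(1)=2$, $U_1'''(1)=-10$, $U_1''''(1)=54$ and the classical formula $τ=\frac{3ε_4}{2ω_0^2}-\frac{15ε_3^2}{4ω_0^4}$, valid for $\frac12Σ^2+\frac12ω_0^2x^2+ε_3x^3+ε_4x^4+\cdots=ω_0\mathcal{I}+τ\mathcal{I}^2+\cdots$, one gets $τ=-\frac{11}{12}$, hence $-\frac{11}{24}$ after the rescaling. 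All remaining Taylor coefficients produce only terms of degree $\geq 5$ ($IJ^3$, $I^2J$, $I^3$, $J^5$, \dots), absorbed into $O(5)$; summing the three pieces gives \eqref{eq:bnf}.

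I expect the real difficulty to be conceptual rather than computational. The naive R\"ussmann--Sevryuk period-ratio test fails here because $ω_0(μ)$ divided by the orbit frequency $1/μ$ is the constant $\sqrt2$, and it is this degeneracy that forces the normal form out to fourth order --- the nonconstancy KAM needs is recovered only from the $-\frac{11}{24}I^2$ and the quadratic-in-$J$ terms, to be used in the next section. The two steps deserving care are making the $μ$-dependent transverse normalisation into an honest canonical transformation of the full four-dimensional phase space (handled by the $\Phi=S+ηJ$ device above) and the quartic coefficient itself, which is short but error-prone and worth confirming with a computer algebra system.
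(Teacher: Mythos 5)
Your proposal is correct and arrives at \eqref{eq:bnf} by a genuinely different computational route than the paper. The paper works entirely with explicit generating functions: a first one, $\varphi=(1-u)W\Sigma+(1-W)v$, straightens the angular momentum and carries $\Xi_1$ to the transverse origin (your affine step $J=1-W$, $\eta=-w$ is the same move), and then a second generating function $\nu$ with undetermined cubic and quartic coefficients is solved simultaneously with the ansatz $G_0=\bigl(x^2+\tfrac12X^2\bigr)\bigl(\alpha(x^2+\tfrac12X^2)+\gamma Y^2+\beta Y+1\bigr)+O(5)$, yielding $\alpha=-11/24$, $\beta=\gamma=1$ by an (implicitly computer-assisted) undetermined-coefficients computation. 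You instead exploit the cyclic variable to reduce to the one-parameter family $\tfrac12\Sigma^2+U_\mu(\sigma)$ and read the three blocks of the normal form off classical invariants of that family: the equilibrium energy $\tfrac12+\ln(1-J)$ gives $-J(1+J/2+J^2/3+J^3/4)$; the linearized frequency $\sqrt2/(1-J)$ gives the $I(1+J+J^2)$ block once the action is normalized so the transverse frequency is $1$ at $J=0$; and the first Birkhoff invariant $\tau=\tfrac{3\epsilon_4}{2\omega_0^2}-\tfrac{15\epsilon_3^2}{4\omega_0^4}=-\tfrac{11}{12}$ of $U_1$ gives $-\tfrac{11}{24}I^2$. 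Your grading argument correctly shows these are the only contributions of degree $<5$, and your Taylor data ($\epsilon_3=-5/3$, $\epsilon_4=9/4$, $\omega_0^2=2$) agree with the paper's expansion \eqref{eq:f0-almost-bnf}. Your route is more conceptual and hand-checkable, and it explains why the answer has the shape it does; the paper's route buys an explicit formula for the transformation, which is used afterwards to write $w=-\eta+\rho+O(5)$ when transporting the perturbation $\beta V$. Two trivial slips to fix: the normalizing rescaling should read $\mathcal I=I/\sqrt2$ (this is what your own expressions $\omega_0/\sqrt2$ and $\tau/2$ actually use), and it is $\theta$, not $\eta$, that must be rescaled inversely to keep the map canonical.
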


\begin{remark}
  The transformation $Φ$ extends continuously over the zero section
  $Z$. The extension blows down the $2$-torus $Z$ to the $1$-torus
  (periodic orbit) $Ξ_1$ by collapsing the $θ$-cycle on $Z$. In
  addition, the non-standard choice of degrees for the action
  variables $I$ and $J$ is because they are determined by the pullback
  of the degrees of $σ, w, Σ$ and $W$ (all of degree $1$) by $Φ$.
\end{remark}

\begin{proof}
  The generating function $φ(W,Σ;u,v) = (1-u) W Σ + (1-W) v$ induces the
  canonical transformation $(σ,w,Σ,W) = f(u,v,U,V)$ where
  \begin{align}
    \label{eq:fgen}
    σ &= (1-u)(1-V), &&& w &= -v - U(1-u)/(1-V) \bmod 2 π, \\\notag
    Σ &= U/(V-1),    &&& W &= 1-V.
  \end{align}
  This transforms the Hamiltonian $F_0$ to
  \begin{equation}
    \label{eq:f0-after-f}
    F_0 = \underbrace{\frac{1}{2} (1-u)^{-2} + \frac{1}{2} (1-V)^{-2} U^2 + \ln(1-u)}_{G_0} + \ln(1-V).
  \end{equation}
  The symplectic map $f$ is singular along the set $\set{V=1}$ (which
  should be mapped to the zero angular momentum locus $\set{W=0}$),
  and it transforms $\set{u=0, U=0}$ to the variety of periodic points
  $Ξ$. By design, $f$ is a symplectomorphism of $\set{(u,v,U,V) \mid V
    < 1} ⊂ \cotangent (\R × \R/2 π \Z)$ onto an open neighbourhood of
  $Ξ$. Additionally, $f$ maps an open neighbourhood of
  $\set{(u,v,U,V) \mid u=U=V=0}$ onto an open neighbourhood of the
  periodic locus $Ξ_1$.

  The determination of a further coordinate change is independent of
  the final term in $F_0$, which involves only $V$, so let $G_0 = F_0
  - \ln(1-V)$ as indicated in \eqref{eq:f0-after-f}. With the fourth-order
  Maclaurin expansion of $G_0$, one obtains
  \begin{equation}
    \label{eq:f0-almost-bnf}
     G_0 = \left(\frac{3V^2}{2}  + V + \frac{1}{2}\right) U^2 + \left(\frac{9u^2}{4} + \frac{5u}{3}  + 1\right) u^2  + O(5),
  \end{equation}
  where $O(5)$ is the remainder term that contains terms of degree $5$ and higher.

  One postulates a second generating function
  \begin{equation}
    \label{eq:nu-gen-fun}
    ν = ν(U,V;x,y) = xU + yV + \sum_{3 ≤ i+j+k+l ≤ 4} ν_{ijkl} x^i y^j U^k V^l + O(5),
  \end{equation}
  and a transformed Hamiltonian\footnote{A reader who is familiar with the Birkhoff normal form may wonder why $G_0$ includes cubic terms. These computations mirror those for the Birkhoff normal form, but our Hamiltonian is not being expanded in a neighbourhood of an isolated critical point.}
  \begin{equation}
    \label{eq:g0-nf}
    G_0 = \left(x^2 + \frac{X^2}{2} \right)\left(α\left(x^2 + \frac{X^2}{2} \right) + γ Y^2 + βY + 1\right) + O(5).
  \end{equation}
  One solves for the generating function $ν$ and $G_0$ simultaneously, and arrives at
  \begin{eqnarray}
    ν &= yV + \frac{55 U x^3}{144}  - \frac{5 U V x^2}{6}  - \frac{5 U x^2}{6}  + \frac{3 U V^2 x}{8}  + \frac{U V x}{2}  + \frac{233 U^3 x}{288}\\    \label{eq:nu-gen-fun-soln}
    &+ xU - \frac{5 U^3 V}{9}  - \frac{5 U^3}{18} + O(5) \notag
  \end{eqnarray}
  and $α = -11/24$, $β = γ = 1$.

  Finally, let $I = (x^2 + X^2/2)$, $θ$ be the conjugate angle ($\bmod
  2 π$), and $η = y \bmod 2 π$, $J = Y$. Then the transformed
  Hamiltonian $F_0$ is congruent $\bmod\ O(5)$ to that in
  \eqref{eq:bnf}.
\end{proof}

\begin{proof}[Proof of Theorem \ref{thm:kam-tori-high-temperature-limit}]
  The rescaled thermostated Hamiltonian $F_{β} = F_0 + β V(q) = F_0 +
  O(β)$ where $O(β) = β V(w)$ is $C^r$, $r>4$, and $2 π$-periodic in
  $w$. Under the sequence of canonical transformations in lemma
  \ref{mu-is-1}, $w = -η + ρ(θ,η,I,J) + O(5) \bmod 2 π$ where $ρ$ is
  an analytic real-valued function, and $O(5)$ is a remainder in
  $I,J$. So the perturbation in the approximate angle-action variables
  $(θ,η,I,J)$ is $C^r$, $r>4$, and $O(β)$.

  Since $F_0$ \eqref{eq:bnf} has a non-vanishing Hessian determinant
  in the action variables $(I,J)$, the KAM theorem
  applies~\cite{MR2554208,MR1858536,MR0163025,MR0147741}.
\end{proof}

\section{Nos{\'e}-like Thermostats}
\label{sec:nose-like-thermostats}

This section proves theorem
\ref{thm:kam-tori-for-nose-like-thermostats}. This section employs the
convention that $G_i$ denotes the partial derivative of the function
$G$ with respect to the $i$-th variable.

\subsection{The Thermostat's Normal Form}
\label{sec:normal-form}

To prove the normal form for a Nos{\'e}-like thermostat in
\ref{thm:kam-tori-for-nose-like-thermostats}, observe that Hamilton's
equations for the Hamiltonian $F(q,p,s,p_s) = H(q,p/u) + N(s,p_s)$ are
\begin{align}
  \label{eq:h-eqns-nlike}
  \dot q &= u^{-1} H_2, &&& \dot p   &= -H_1, \\\notag
  \dot s &= N_2,        &&& \dot p_s &= \frac{u'}{u} E(H) - N_1,
\end{align}
where $H_i$ ($N_i$) is the partial derivative of $H$ ($N$) with
respect to the $i$-th argument, $E(H)_{(q,p)} = p · H_2(q,p)$ is the
fibre derivative of $H$ and $H$ and its derivatives are evaluated at
$(q,p/u)$.

In thermodynamic equilibrium, $\dot s = 0 = \dot p_s$. Solving $\dot
p_s = 0$ yields $E(H) = N_1 / (\ln u)_s$. Since the right-hand side is
independent of $(q,p)$, the left-hand side must be depend only on $s$
and therefore it must be constant. Following convention, let $nkT$ be
this constant. Then, since $\dot s = 0$ and $N$ is increasing and
homogeneous of degree $2$ in $p_s$,
\begin{equation}
  \label{eq:n-0}
  N(s,p_s) = \frac{1}{2} A p_s^2 + nkT \ln u,
\end{equation}
where $A=A(s) > 0$. Because $T$ is constant, the function $A$ may be
parameterized by $T$ so: $A = A_T$. Since $u$ is a diffeomorphism, the
change of variables $s → u$ gives
\begin{equation}
  \label{eq:n-nf-0}
  N(u,p_u) = \frac{1}{2} Ω_T p_u^2 + nkT \ln u,
\end{equation}
where $Ω_T = A_T · (u')^2$.  This proves the normal form for
the thermostat under the hypotheses of
\ref{thm:kam-tori-for-nose-like-thermostats}.

\begin{remark}
  In the general case where $N_2$ vanishes along $p_s=0$, $A =
  A(s,p_s)$ is a smooth function of both variables. This added
  generality introduces the possibility of multiple thermodynamic
  equilibria at the same temperature, which differ only in the value
  of the momentum $p_s$. It is difficult to understand the
  significance of this.
\end{remark}

\subsection{KAM-tori in the high-temperature limit}
\label{sec:kam-nose-like}

By means of the rescaling in \eqref[1]{eq:rescaling}, with $M=1$, the
thermostated Hamiltonian is transformed to
\begin{equation}
  \label{eq:rescaled-f-nose-like}
  F = T \times \underbrace{\left[ \frac{1}{2} \left( W/σ\right)^2 + \frac{1}{2} Ω_T(σ/\sqrt{T}) Σ^2 + β V(w/ε) + \ln σ \right]}_{F_{β}} - \frac{1}{2} T \ln(T).
\end{equation}
By the hypothesis of
Theorem~\ref{thm:kam-tori-for-nose-like-thermostats}, as $T → ∞$,
$Ω_T(σ/√ T)$ converges in $C^r(\R^+, \R^+)$ to a limit $Ω(σ)$ for some
$r>4$.

The Hamiltonian $F_0$ has the invariant variety
$Ξ$~\eqref{eq:periodic-variety} of periodic points and the invariant
periodic set $Ξ_1$, as in the constant thermostat mass case.

\begin{lemma}
  \label{lem:nose-like-normal-form}
  Assume that $Ω(σ) = 1 + a(σ-1) + b(σ-1)^2/2 + \cdots$. If $b =
  (96α+9a^2-30a+44)/6$, $β = (2-a)/2$ and $γ = (48α+3a^2-21a+34)/12$,
  then  there are open sets $A ⊂ \cotangent \T^2$, $B ⊂ \cotangent (\R^+ ×
  \T^1)$ such that $Z ⊂ A$, $Ξ_1 ⊂ B$ and a canonical transformation
  $$Φ : A - Z → B - Ξ_1 \qquad (σ, w, Σ, W)= Φ(θ,η,I,J)$$ that transforms the Hamiltonian $F_0$
  \eqref{eq:rescaled-f-nose-like} to
  \begin{align}
    \label{eq:bnf-nl}
    F_0 &= I(α I + 1 + β J + γ J^2) - J(1+J/2+J^2/3+J^3/4) + O(5)
  \end{align}
  where $I$ has degree $2$, $J$ has degree $1$ and $O(5)$ is a
  remainder term containing terms of degree $≥ 5$.
\end{lemma}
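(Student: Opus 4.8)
The plan is to follow the two-step generating-function argument of Lemma~\ref{mu-is-1} almost verbatim, tracking how the $2$-jet of $\Omega$ at $\sigma=1$ propagates into the normal-form coefficients. In the combined limit $\beta=0$, $T=\infty$ the rescaled Hamiltonian \eqref{eq:rescaled-f-nose-like} becomes $F_0=\tfrac{1}{2}(W/\sigma)^2+\tfrac{1}{2}\Omega(\sigma)\Sigma^2+\ln\sigma$ on $\cotangent(\R^+\times\T^1)$. Since $w$ is cyclic, $W$ is a conserved ``angular momentum''; fixing $W=\mu$, the reduced system in $(\sigma,\Sigma)$ has equilibrium $\Sigma=0$, $-\mu^2\sigma^{-3}+\sigma^{-1}=0$, that is, $\sigma=|\mu|$, which recovers the periodic family $\Xi$ \eqref{eq:periodic-variety} and, at $\mu=1$, the periodic orbit $\Xi_1\subset\{\sigma=1,\ W=1,\ \Sigma=0\}$. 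Note that $\Omega$ does not move $\Xi$; it only changes the transverse frequencies. The location of $\Xi_1$ is why $\Omega$ gets expanded about $\sigma=1$.

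First I would apply the same first generating function $\varphi(W,\Sigma;u,v)=(1-u)W\Sigma+(1-W)v$ as in Lemma~\ref{mu-is-1}. Since $\varphi$ does not involve $\Omega$, the transformation \eqref{eq:fgen} is unchanged, and, using $\sigma=(1-u)(1-V)$, it carries $F_0$ to $G_0+\ln(1-V)$ with
\[ G_0=\tfrac{1}{2}(1-u)^{-2}+\tfrac{1}{2}\,\Omega\bigl((1-u)(1-V)\bigr)(1-V)^{-2}U^2+\ln(1-u). \]
As before, the trailing $\ln(1-V)$ depends on $V$ alone and can be dropped. Substituting $\Omega(\sigma)=1+a(\sigma-1)+\tfrac{b}{2}(\sigma-1)^2+\cdots$ with $\sigma-1=-(u+V)+uV$, only the $0$-, $1$- and $2$-jets of $\Omega$ survive in the fourth-order Maclaurin expansion (all of $u,v,U,V$ have degree $1$, so $(\sigma-1)^k$ has degree $\ge k$), and one obtains the analogue of \eqref{eq:f0-almost-bnf}: the $(9u^2/4+5u/3+1)u^2$ block is untouched, while the $U^2$-block acquires $a$- and $b$-dependent coefficients; in particular the cubic term $U^2V$ now has coefficient $(2-a)/2$ and a new cubic term $uU^2$ appears with coefficient $-a/2$.

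Next I would postulate, exactly as in \eqref{eq:nu-gen-fun}--\eqref{eq:g0-nf}, a second generating function $\nu=xU+yV+\sum_{3\le i+j+k+l\le4}\nu_{ijkl}x^iy^jU^kV^l+O(5)$ and a target $G_0=\bigl(x^2+\tfrac{X^2}{2}\bigr)\bigl(\alpha(x^2+\tfrac{X^2}{2})+\gamma Y^2+\beta Y+1\bigr)+O(5)$, and solve the homological equations order by order for the $\nu_{ijkl}$ and for $\alpha,\beta,\gamma$. The cubic equations should fix $\beta=(2-a)/2$ (reducing to $1$ when $a=0$), with the extra cubic terms killed by the degree-$3$ part of $\nu$; their second-order feedback into degree $4$ then produces the $a^2$ contributions to $\alpha$ and $\gamma$, while $\Omega$'s quadratic coefficient $b$ enters $\alpha$ directly at degree $4$. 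Solving ``$b=(96\alpha+9a^2-30a+44)/6$'' for $\alpha$ gives the leading quartic coefficient, and then $\gamma=(48\alpha+3a^2-21a+34)/12$. Setting $a=b=0$ returns $\alpha=-11/24$, $\beta=\gamma=1$, consistent with Lemma~\ref{mu-is-1} --- a useful check. Finally, with $I=x^2+X^2/2$, $\theta$ its conjugate angle, $\eta=y\bmod 2\pi$, $J=Y$, the composite of the two canonical transformations is the desired $\Phi$, defined on $A-Z$ with $A,B$ as in Lemma~\ref{mu-is-1}; since the $\{V=1\}$ singularity of $\varphi$ still maps to the zero-angular-momentum locus, $\Phi$ extends continuously over $Z$ by collapsing the $\theta$-cycle onto $\Xi_1$, exactly as in the remark following Lemma~\ref{mu-is-1}, and $F_0$ is congruent mod $O(5)$ to \eqref{eq:bnf-nl}.

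The one step that is more than bookkeeping is the solvability of the order-$4$ homological equations: a priori one might worry that a general $\Omega$ cannot be normalized to this three-coefficient family. What makes it work is precisely that $\Omega$ enters $G_0$ only through the two numbers $a$ and $b$, so the displayed relations are not restrictions on $\Omega$ --- they are the identities that fix $\alpha$ (and hence $\gamma$) in terms of $(a,b)$, with $\beta$ already determined by $a$. Equivalently, the achievable triples fill out the surface $\gamma=4\alpha+\beta^2+\tfrac{3}{2}\beta+\tfrac{1}{3}$, but since the lemma only asserts existence of the normal form for a given $\Omega$, no genericity in $(\alpha,\beta,\gamma)$ is needed. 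I would also confirm, as in Lemma~\ref{mu-is-1}, that the quartic normalization leaves no obstructing angle-dependent terms and that $\Phi$ stays nonsingular on the relevant neighbourhoods; both are structural and inherited unchanged from the $\Omega\equiv1$ computation.
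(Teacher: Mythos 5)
Your proposal is correct and follows exactly the route the paper intends: the paper omits this proof, stating only that it is ``similar to that of Lemma~\ref{mu-is-1},'' and you carry out precisely that two-generating-function scheme, correctly tracking how the $2$-jet of $\Omega$ at $\sigma=1$ enters (the $U^2V$ coefficient $(2-a)/2$ giving $\beta$, the new $uU^2$ term, and $b$ entering only at degree $4$), with the $a=b=0$ consistency check recovering Lemma~\ref{mu-is-1}. Nothing essential is missing relative to the paper's (omitted) argument.
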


\begin{remark}
  In the case $a=b=0$, one finds that $α=-11/24$ and $β=1=γ$, which is
  the result of Lemma~\ref{mu-is-1}. Similar to the assumption that
  $M=1$ in the Nos{\'e}-thermostat case, the assumption that the inverse
  mass $Ω(1)=1$ simplifies the statement of Lemma
  \ref{lem:nose-like-normal-form} and the proof of Theorem
  \ref{thm:kam-tori-for-nose-like-thermostats}, but the latter Theorem
  holds for any value of $Ω(1)>0$.
\end{remark}

The proof of Lemma~\ref{lem:nose-like-normal-form} is similar to that
of Lemma~\ref{mu-is-1} and is omitted. The relations between the parameters
$a$ and $b$ of the thermostat's inverse mass $Ω$ and $α, β, γ$ of the
normal form in approximate action-angle variables arise from the
attempt to force the normal form to be $I$.

\begin{proof}[Proof of Theorem \ref{thm:kam-tori-for-nose-like-thermostats}]
  By Lemma \ref{lem:nose-like-normal-form}, the determinant of the Hessian of
  $F_0$ with respect to the action variables $I,J$ is
  \[ - \left( 2 α + β^2 \right) + 4 α γ I - 4 \left( β γ + α \right) J - \left( 4 γ^2 + 6 α\right) J^2 + O(3), \]
  which equals $O(3)$ iff $α=β=γ=0$. However, if $α=0=γ$, then $a ≠ 2$ and so $β ≠ 0$.
\end{proof}

\section{The Harmonic Oscillator in the Weak-Coupling Limit}
\label{sec:ho}

\begin{proof}[Proof of Theorem \ref{thm:kam-tori-large-mass-limit}]
  After applying the change of variables in \eqref[1]{eq:rescaling}, and a
  rescaling of $(W,w)$ the rescaled thermostated harmonic oscillator Hamiltonian is
  \begin{equation}
    \label{eq:rescaled-ho}
    G_{κ} = \frac{1}{2} \left( W/σ\right)^2 + \frac{1}{2} w^2 + κ \left( \frac{1}{2} Σ^2 + \ln σ \right),
  \end{equation}
  where $κ = ε /(ω √ β)$. In the following, it will be assumed that $ω
  = β = 1$ so that $κ = ε$. The generating function $φ = w V √ σ + σ U$ induces the canonical change of variables
  \begin{align}
    \label{eq:change-ho}
    σ &= u, &&& w &= v/√ u, &&& {Σ} &= U + \frac{1}{2} vV/u, &&& W &= V √ u.
  \end{align}
  When composed with the canonical transformation $u → 1-u$, $U → -U$, the Hamiltonian $G_{κ}$ transforms to
  \begin{equation}
    \label{eq:gk}
    G_{κ} = \frac{1}{2(1-u)} \underbrace{\left( V^2 + v^2 \right)}_{2E} + κ \underbrace{\left[ \frac{1}{2} (U - \frac{1}{2} vV/(1-u))^2 + \ln(1-u) \right]}_{Q_{κ}}.
  \end{equation}
  This Hamiltonian weakly couples the variables $(v,V)$ with $(u,U)$
  when $κ \muchlessthan 1$, with $(v,V)$ evolving on a fast time-scale
  and $(u,U)$ evolving on a slow timescale. Averaging the Hamiltonian
  $G_{κ}$ in $(v,V)$ over a period gives
  \begin{equation}
    \label{eq:gk-ave}
    \bar{G}_{κ} = \frac{E}{2 (1-u)} + κ \left[ \frac{1}{2} U^2 + \frac{E^2}{2^6 (1-u)^2} + \ln(1-u) \right] + O(κ^2)
  \end{equation}
  The Hamiltonian ${κ}^{-1} \bar{G}_{κ}$ has a second-order Maclaurin expansion of
  \begin{equation}
    \label{eq:gk-ave-t2}
    \frac{1}{2} U^2 + \left(\frac{3\,E^2}{16}  + \frac{E}{κ}  - \frac{1}{2} \right)\,u^2 + \left(\frac{E^2}{8}  + \frac{E}{κ}  - 1\right)\,u   + \frac{E^2}{16}  + \frac{E}{κ} + O(κ).
  \end{equation}
  When $E = κ + O(κ^2)$, ${κ}^{-1} \bar{G}_{κ}$ has a critical point at $u=U=0$ and the fourth-order Maclaurin expansion is
  \begin{equation}
    \label{eq:gk-ave-t4}
     \frac{1}{2} \left( U^2 + u^2 \right)  + \frac{2\,u^3}{3}  + \frac{3\,u^4}{4} + 1 + O(κ)
   \end{equation}
   Computations similar to those in Lemma \ref{mu-is-1} show that the Birkhoff Normal Form is
   \begin{equation}
     \label{eq:gk-ave-bnf}
     \bar{G}_{κ} = κ I (1 - 13 I/24) + O(κ^2, 5),
   \end{equation}
   where $I = \frac{1}{2} \left( U^2 + u^2 \right)$. Since the averaged system is KAM sufficient, the unaveraged Hamiltonian $G_{κ}$ is an $O(κ^2)$ perturbation of a KAM sufficient Hamiltonian system.
\end{proof}

\begin{remark}
  \label{rmk:ho}
  One may attempt to apply the Birkhoff Normal Form to the Hamiltonian $\hat{G}_{κ} = G_{κ} - κu/(1-u).$
  The generating function
  \begin{align}
    \label{eq:gen-fun-not-bnf}
    ν &= Ux-2Ux^2/3+65Ux^3/288+295U^3x/288-4U^3/9 \\\notag
    &+ Vy + U(x-2)(y^2+V^2)/4κ+U^2Vy/2κ^2
  \end{align}
  induces a canonical transformation $(u,v,U,V) = f(θ,η,I,J)$ that transforms $\hat{G}_{κ}$ to normal form:
  \begin{equation}
    \label{eq:hatgk-bnf}
    \hat{G}_{κ} = κ I + J + α I^2 + β I J + γ J^2 + O(5),
  \end{equation}
  where $α = -13 κ/24$, $β = -1$, $γ = -1/2 κ$ and $I=(x^2 + X^2)/2$,
  $J = (y^2 + Y^2)/2$. Note that when $J=0$, $\hat{G}_{κ}$ in
  \eqref{eq:hatgk-bnf} coincides with the averaged Hamiltonian
  $\bar{G}_{κ}$ in \eqref{eq:gk-ave-bnf}.\footnote{The first line of
    \eqref{eq:gen-fun-not-bnf} provides the generating function to
    transform \eqref{eq:gk-ave-t4} to \eqref{eq:gk-ave-bnf}.}

  If the total energy is fixed at $\hat{G}_{κ} = κ h$, then
  \begin{equation}
    \label{eq:j}
      J = κ  \left( h + h^2/2 - I + I^2/24 + O(5) \right)
  \end{equation}
  is the Hamiltonian of the reduced system $d θ/d η = - ∂ J/∂ I$, $d
  I/d η = 0$ on the isoenergy level $\hat{G}_{κ} = κ h$. This implies
  that the Hamiltonian $\hat{G}_{κ}$ is KAM sufficient
  \cite[pp. 46--47]{MR2554208}.

  The final step in this line of proof would be to prove that in the
  limit at $κ = 0$ of a suitably renormalized $\hat{G}_{κ}$ is KAM
  sufficient and $G_{κ}$ is a suitably small perturbation.
\end{remark}

\section{Conclusion}
\label{sec:conclusion}

This note has demonstrated the existence of KAM tori near the
high-temperature limit of a Nos{\'e}-thermostated $1$-degree of freedom
system with a periodic potential, along with similar thermostats that
are scale-invariant. It has also given a ``Hamiltonian'' proof of
Legoll, Luskin and Moeckel's result on the existence of KAM tori in
the Nos{\'e}-Hoover thermostated harmonic oscillator in the
weak-coupling limit.

It is expected that the techniques of this paper may be used to
demonstrate similar results for $n$-degree of freedom
Nos{\'e}-thermostated systems. Potentially more fruitful, however, is
that the techniques of this paper might be useful to create
thermostats with the desired properties. Of course, some features of
the Nos{\'e}-type thermostat must be abandoned in the process.

\bibliographystyle{amsplain}
\bibliography{nh}

\providecommand{\bysame}{\leavevmode\hbox to3em{\hrulefill}\thinspace}
\providecommand{\MR}{\relax\ifhmode\unskip\space\fi MR }
\providecommand{\MRhref}[2]{%
  \href{http://www.ams.org/mathscinet-getitem?mr=#1}{#2}
}
\providecommand{\href}[2]{#2}
\begin{thebibliography}{10}

\bibitem{MR0163025}
V.~I. Arnol{$'$}d, \emph{Proof of a theorem of {A}. {N}. {K}olmogorov on the
  preservation of conditionally periodic motions under a small perturbation of
  the {H}amiltonian}, Uspehi Mat. Nauk \textbf{18} (1963), no.~5 (113), 13--40.
  \MR{0163025 (29 \#328)}

\bibitem{MR1858536}
Rafael de~la Llave, \emph{A tutorial on {KAM} theory}, Smooth ergodic theory
  and its applications ({S}eattle, {WA}, 1999), Proc. Sympos. Pure Math.,
  vol.~69, Amer. Math. Soc., Providence, RI, 2001, pp.~175--292. \MR{1858536
  (2002h:37123)}

\bibitem{hoover}
W.~Hoover, \emph{Canonical dynamics: equilibrium phase space distributions},
  Phys. Rev. A. \textbf{31} (1985), 1695--1697.

\bibitem{MR2299758}
{Legoll, Frédéric}, {Luskin, Mitchell}, and {Moeckel, Richard},
  \emph{Non-ergodicity of the {N}os\'e-{H}oover thermostatted harmonic
  oscillator}, Arch. Ration. Mech. Anal. \textbf{184} (2007), no.~3, 449--463.
  \MR{2299758 (2008d:82068)}

\bibitem{MR2519685}
\bysame, \emph{Non-ergodicity of {N}os\'e-{H}oover dynamics}, Nonlinearity
  \textbf{22} (2009), no.~7, 1673--1694. \MR{2519685 (2010c:37141)}

\bibitem{MR0147741}
J.~Moser, \emph{On invariant curves of area-preserving mappings of an annulus},
  Nachr. Akad. Wiss. G\"ottingen Math.-Phys. Kl. II \textbf{1962} (1962),
  1--20. \MR{0147741 (26 \#5255)}

\bibitem{nose}
S.~Nos\'e, \emph{A unified formulation of the constant temperature molecular
  dynamics method}, J. Chem. Phys. \textbf{81} (1984), 511--519.

\bibitem{MR1354540}
M.~B. Sevryuk, \emph{K{AM}-stable {H}amiltonians}, J. Dynam. Control Systems
  \textbf{1} (1995), no.~3, 351--366. \MR{1354540 (96m:58222)}

\bibitem{MR1390625}
\bysame, \emph{Invariant tori of {H}amiltonian systems that are nondegenerate
  in the sense of {R}\"ussmann}, Dokl. Akad. Nauk \textbf{346} (1996), no.~5,
  590--593. \MR{1390625 (97c:58053)}

\bibitem{MR2554208}
Dmitry Treschev and Oleg Zubelevich, \emph{Introduction to the perturbation
  theory of {H}amiltonian systems}, Springer Monographs in Mathematics,
  Springer-Verlag, Berlin, 2010. \MR{2554208 (2011b:37116)}

\end{thebibliography}
\end{document}